\numberwithin{equation}{section}
\newtheorem{theorem}{Theorem}[section]
\newtheorem{lemma}[theorem]{Lemma}
\newtheorem{proposition}[theorem]{Proposition}
\theoremstyle{definition}
\newtheorem{definition}[theorem]{Definition} 
\newtheorem{problem}[theorem]{Problem} 
\newtheorem{remark}[theorem]{Remark}
\begin{document}


\title[Complete intersection
vanishing ideals]{Complete intersection
vanishing ideals on sets of clutter type over finite
fields}  

\author{Azucena Tochimani}
\address{
Departamento de
Matem\'aticas\\
Centro de Investigaci\'on y de Estudios
Avanzados del
IPN\\
Apartado Postal
14--740 \\
07000 Mexico City, D.F.
}
\email{tochimani@math.cinvestav.mx}
\thanks{The first author was partially supported by CONACyT. The
second author was partially supported by SNI}

\author{Rafael H. Villarreal}
\address{
Departamento de
Matem\'aticas\\
Centro de Investigaci\'on y de Estudios
Avanzados del
IPN\\
Apartado Postal
14--740 \\
07000 Mexico City, D.F.
}
\email{vila@math.cinvestav.mx}

\keywords{Complete intersection, monomial parameterization, projective
space, vanishing ideal, binomial ideal,
finite field, Gr\"obner basis, clutter, Reed-Muller-type code}
\subjclass[2000]{Primary 14M10; Secondary 14G15, 13P25, 13P10, 11T71.} 

\begin{abstract} 
In this paper we give a classification of complete intersection
vanishing ideals on parameterized sets of clutter type over finite
fields. 
\end{abstract}

\maketitle

\section{Introduction}\label{section-intro}
Let $R=K[\mathbf{y}]=K[y_1,\ldots,y_n]$ be a polynomial ring over 
a finite field $K=\mathbb{F}_q$ and 
let $y^{v_1},\ldots,y^{v_s}$ be a finite set of monomials in
$K[\mathbf{y}]$. As usual we denote the affine and projective spaces
over the field $K$ of dimensions $s$ and $s-1$ by
$\mathbb{A}^s$ and $\mathbb{P}^{s-1}$, respectively. Points of the
projective space ${\mathbb P}^{s-1}$ are denoted by $[\alpha]$, where $0\neq \alpha\in
\mathbb{A}^s$. 

We consider a set $\mathbb{X}$, in the projective space
$\mathbb{P}^{s-1}$, parameterized  by $y^{v_1},\ldots,y^{v_s}$. The set
$\mathbb{X}$ consists of all points $[(x^{v_1},\ldots,x^{v_s})]$ in $\mathbb{P}^{s-1}$ that 
are well defined, i.e., $x\in K^n$ and $x^{v_i}\neq 0$  for some $i$.
The set $\mathbb{X}$ is called of {\it clutter type} if ${\rm
supp}(y^{v_i})\not\subset{\rm supp}(y^{v_j})$ for $i\neq j$, where 
${\rm supp}(y^{v_i})$ is the {\it support\/} of the monomial $y^{v_i}$
consisting of the variables that occur in $y^{v_i}$. In this case we
say that the set of monomials $y^{v_1},\ldots,y^{v_s}$ is of {\it
clutter type\/}.  This terminology
comes from the fact that the condition ${\rm
supp}(y^{v_i})\not\subset{\rm supp}(y^{v_j})$ 
for
$i\neq j$ means that there is a {\it clutter\/} $\mathcal{C}$, in the
sense of \cite{ci-codes}, with
vertex set $V(\mathcal{C})=\{y_1,\ldots,y_n\}$ and edge set 
$$E(\mathcal{C})=\{{\rm supp}(y^{v_1}),\ldots, {\rm supp}(y^{v_s})\}.$$
A clutter is also called a {\it simple hypergraph\/}, see Definition~\ref{clutter-def}.
  
Let $S=K[t_1,\ldots,t_s]=\oplus_{d=0}^\infty S_d$ be a polynomial ring 
over the field $K$ with the standard grading. The graded ideal 
$I(\mathbb{X})$ generated by the  
homogeneous polynomials of $S$ that vanish at all points of
$\mathbb{X}$ is called the {\it vanishing ideal\/} of $\mathbb{X}$. 

There are good reasons to study vanishing ideals over finite fields. They are used in 
algebraic coding theory \cite{GRT} and in polynomial interpolation
problems \cite{gasca-sauer,vanishing-ideals}. The Reed-Muller-type codes
arising from vanishing ideals on monomial parameterizations have
received a lot of attention
\cite{carvalho,delsarte-goethals-macwilliams,geil-thomsen,GRT,cartesian-codes,
algcodes,ci-codes,sorensen}. 

The vanishing ideal $I(\mathbb{X})$ is a 
{\it complete intersection\/} if $I(\mathbb{X})$ is generated by 
$s-1$ homogeneous polynomials. Notice that $s-1$ is the height of
$I(\mathbb{X})$ in the sense of \cite{Mats}. The interest in complete
intersection vanishing 
ideals over finite
fields comes from information and communication theory, and algebraic
coding theory \cite{duursma-renteria-tapia,gold-little-schenck,hansen}. 

Let $T$ be a projective torus in $\mathbb{P}^{s-1}$ (see
Definition~\ref{projectivetorus-def}) and let $\mathbb{X}$ be the set
in $\mathbb{P}^{s-1}$ parameterized by a clutter $\mathcal{C}$ (see
Definition~\ref{parameterized-clutter}). Consider the set 
$X=\mathbb{X}\cap T$. In \cite{ci-codes} it is shown that
$I(X)$ is a complete intersection if and only if $X$ is a projective
torus in $\mathbb{P}^{s-1}$ . If the clutter $\mathcal{C}$ has all its 
edges of the same cardinality, in \cite{d-compl} a classification of the complete  
intersection property of $I(X)$ is given using linear algebra. 

The main result of this paper is a classification of the complete
intersection property of $I(\mathbb{X})$ when $\mathbb{X}$ is of
clutter type (Theorem~\ref{azucena-vila-ci}). Using the techniques
of \cite{algcodes}, this classification can be used to study the {\it basic
parameters\/} \cite{MacWilliams-Sloane,tsfasman} 
of the Reed-Muller-type codes associated to $\mathbb{X}$.

For all unexplained
terminology and additional information,  we refer to
\cite{Mats} (for commutative algebra), \cite{CLO} (for Gr\"obner bases), and
\cite{algcodes,vanishing-ideals,tsfasman} (for vanishing
ideals and coding theory). 

\section{Complete intersections}
In this section we give a full classification of the 
complete intersection property of vanishing ideals of sets of clutter
type over finite fields. We 
continue to employ the notations and 
definitions used in Section~\ref{section-intro}.

Throughout this section $K=\mathbb{F}_q$ is a finite field, 
$y^{v_1},\ldots,y^{v_s}$ are distinct monomials in
the polynomial ring $R=K[\mathbf{y}]=K[y_1,\ldots,y_n]$, with
$v_i=(v_{i1},\ldots,v_{in})$ and $y^{v_i}=y_1^{v_{i1}}\cdots
y_n^{v_{in}}$ for $i=1,\ldots,s$, $\mathbb{X}$ is the
set in $\mathbb{P}^{s-1}$ parameterized by these monomials, and $I(\mathbb{X})$ is the
vanishing ideal of $\mathbb{X}$. Recall that
$I(\mathbb{X})$ is the graded ideal of the polynomial ring $S=K[t_1,\ldots,t_s]$
generated by the homogeneous polynomials of $S$ that vanish on
$\mathbb{X}$.

\begin{definition} Given $a=(a_1,\ldots,a_n)\in\mathbb{N}^n$, we set 
$y^a:=y_1^{a_1}\cdots y_n^{a_n}$. The {\it support\/} of $y^a$, 
denoted ${\rm supp}(y^a)$, is the set of all $y_i$ such that $a_i>0$.
\end{definition}

\begin{definition} The set $\mathbb{X}$ is of {\it clutter type\/} if
${\rm
supp}(y^{v_i})\not\subset{\rm supp}(y^{v_j})$ for $i\neq j$.
\end{definition}

\begin{definition}\rm 
A {\it binomial\/} of $S$ is an
element of the form $f=t^a-t^b$, for some 
$a,b$ in $\mathbb{N}^s$. An ideal generated by 
binomials is called a {\it binomial ideal\/}.
\end{definition}

The set $\mathcal{S}=\mathbb{P}^{s-1}\cup\{[0]\}$ is a monoid under componentwise
multiplication, that is, given $[\alpha]=[(\alpha_1,\ldots,\alpha_s)]$
and $[\beta]=[(\beta_1,\ldots,\beta_s)]$ in $\mathcal{S}$, the operation of this monoid
is given by 
$$
[\alpha]\cdot[\beta]=[\alpha_1\beta_1,\cdots,\alpha_s\beta_s],
$$
where $[\mathbf{1}]=[(1,\ldots,1)]$ is the identity element.

\begin{theorem}{\rm\cite{vanishing-binomial}}\label{oct11-14-1} If
$K=\mathbb{F}_q$ is a finite
field and $\mathbb{Y}$ is a subset of $\mathbb{P}^{s-1}$, 
then  $I(\mathbb{Y})$ is a binomial ideal if and
only if $\mathbb{Y}\cup\{[0]\}$ is a submonoid of
$\mathbb{P}^{s-1}\cup\{[0]\}$.
\end{theorem}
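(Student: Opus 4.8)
The plan is to prove the two implications separately. Throughout I use that $\mathbb{Y}$ is finite (projective space over a finite field is finite), that $I(\mathbb{Y})$ is a graded ideal, and that the $K$-rational zero set of $I(\mathbb{Y})$ in $\mathbb{P}^{s-1}$ is exactly $\mathbb{Y}$. This last fact I would establish by a Lagrange-type separation argument: given a point $Q\in\mathbb{P}^{s-1}$ with $Q\notin\mathbb{Y}$, for each $[P]\in\mathbb{Y}$ choose a linear form vanishing at $[P]$ but not at $Q$, and take the product of these forms to obtain a homogeneous element of $I(\mathbb{Y})$ not vanishing at $Q$.

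For the implication that a binomial $I(\mathbb{Y})$ forces a submonoid, suppose $I(\mathbb{Y})$ is generated by binomials $g_\ell=t^{a_\ell}-t^{b_\ell}$. Since each $g_\ell$ takes the value $1-1=0$ at $[\mathbf{1}]$ and $\mathbb{Y}$ is the zero set of $I(\mathbb{Y})$, we get $[\mathbf{1}]\in\mathbb{Y}$. For closure, fix representatives $\alpha,\beta$ of points $[\alpha],[\beta]\in\mathbb{Y}$; vanishing of $g_\ell$ at these points gives $\alpha^{a_\ell}=\alpha^{b_\ell}$ and $\beta^{a_\ell}=\beta^{b_\ell}$, and multiplying these equalities shows that $g_\ell$ vanishes at the representative $\alpha\beta$ of $[\alpha]\cdot[\beta]$. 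Hence either $[\alpha]\cdot[\beta]=[0]$ or it lies in the zero set $\mathbb{Y}$, so $\mathbb{Y}\cup\{[0]\}$ is a submonoid. This direction is routine.

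For the converse, assume $M=\mathbb{Y}\cup\{[0]\}$ is a submonoid. Because $I(\mathbb{Y})$ is graded, it suffices to show each graded piece $I(\mathbb{Y})_d$ is spanned over $K$ by the binomials it contains (no monomial lies in $I(\mathbb{Y})$, since $[\mathbf{1}]\in\mathbb{Y}$). Fix representatives $P_1,\ldots,P_m$ of the points of $\mathbb{Y}$ and consider the evaluation map $\mathrm{ev}_d\colon S_d\to K^m$, $t^a\mapsto(P_1^a,\ldots,P_m^a)$, whose kernel is $I(\mathbb{Y})_d$. Any two degree-$d$ monomials with equal value vectors yield a binomial in the kernel, and a direct dimension count shows that these binomials span $\ker\mathrm{ev}_d$ precisely when the \emph{distinct} value vectors arising from degree-$d$ monomials are $K$-linearly independent.

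The heart of the matter is this independence, which I would deduce from linear independence of characters. After normalizing away the projective scaling, each degree-$d$ monomial induces a homomorphism from the finite commutative monoid $M$ into the multiplicative monoid $(K,\cdot)$; monomials with distinct value vectors induce distinct characters, and distinct characters are $K$-linearly independent by the Artin--Dedekind lemma. The main obstacle is performing the normalization in the presence of the absorbing element $[0]$ and of points having some vanishing coordinates, where the naive device of dividing by a fixed reference monomial breaks down. I would resolve this by exploiting the idempotent-and-units decomposition of the finite commutative monoid $M$ (equivalently, restricting attention to the support of each point), so that comparing value vectors genuinely reduces to comparing characters on the relevant groups of units. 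Combining the independence with the dimension count shows that $I(\mathbb{Y})_d$ is spanned by binomials for every $d$, whence $I(\mathbb{Y})$ is a binomial ideal.
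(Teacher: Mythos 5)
This paper does not actually prove Theorem~\ref{oct11-14-1}: the statement is imported from the preprint \cite{vanishing-binomial}, so there is no internal proof to compare yours against, and I can only assess your argument on its own terms.

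Your architecture is right and most of it is solid: the Lagrange-type argument that the zero set of $I(\mathbb{Y})$ is exactly $\mathbb{Y}$, the easy direction (binomial generators vanish at $[\mathbf{1}]$ and at componentwise products of representatives), and the dimension count reducing ``each graded piece is spanned by its binomials'' to ``distinct degree-$d$ value vectors are linearly independent'' are all correct. The gap is that the one step carrying all the weight --- that linear independence --- is left at the level of a declaration. The obstacle you name is real, but the cure you propose (``the idempotent-and-units decomposition \dots\ so that comparing value vectors genuinely reduces to comparing characters on the relevant groups of units'') does not reduce the problem as stated. Splitting $\mathbb{Y}$ by support into groups $\mathbb{Y}_A$ (points with support $A$) and applying Artin--Dedekind to each unit group separately only tells you, block by block, that the coefficients sum to zero over each character class of the monomials \emph{visible} on that block (those supported inside $A$); a monomial vanishing identically on a block contributes nothing there, two monomials can agree on one block and differ on another, and linear independence of the global value vectors is not a block-by-block statement. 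Your route can be completed, but it needs two further ingredients you do not supply: first, that multiplication by the idempotent $[\epsilon_A]$ (the $0$--$1$ point with support $A$) gives projections $\mathbb{Y}_B\to\mathbb{Y}_A$ for $A\subset B$ which intertwine the characters, so that two monomials with the same minimal block and the same character there have \emph{equal} value vectors; and second, an induction over the poset of supports that propagates the block conditions into the conclusion $c_i=0$. Without something of this sort the argument does not close.

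There is also a shortcut that dissolves the normalization problem entirely and which you miss: replace $\mathbb{Y}$ by its affine cone $\widetilde{\mathbb{Y}}=\{\lambda\alpha : \lambda\in K,\ [\alpha]\in\mathbb{Y}\}$. The submonoid hypothesis makes $\widetilde{\mathbb{Y}}$ a genuine submonoid of $(K^s,\cdot)$ containing $\mathbf{1}$ and $0$; a homogeneous polynomial of positive degree vanishes on $\mathbb{Y}$ if and only if it vanishes on $\widetilde{\mathbb{Y}}$; and every monomial $t^a$ is an honest monoid homomorphism $\widetilde{\mathbb{Y}}\to(K,\cdot)$ --- no choice of representatives, no trouble with $[0]$ or with vanishing coordinates. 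The Dedekind independence argument works verbatim for distinct monoid homomorphisms into the multiplicative monoid of a field, and applied once to $\widetilde{\mathbb{Y}}$ it yields exactly the independence you need in a single stroke.
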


\begin{remark} Since $\mathbb{X}$ is parameterized by monomials, the
set $\mathbb{X}\cup\{[0]\}$ is a monoid under componentwise
multiplication. Hence, by Theorem~\ref{oct11-14-1}, $I(\mathbb{X})$
is a binomial ideal.
\end{remark}

\begin{lemma}\label{nov10-14} Let $y^{v_1},\ldots,y^{v_s}$ be a set of monomials such
that ${\rm supp}(y^{v_i})\not\subset{\rm supp}(y^{v_j})$
for any $i\neq j$ and let $\mathcal{G}$ be a minimal generating set of
$I(\mathbb{X})$ consisting of binomials. The following hold.
\begin{itemize}
\item[(a)] If $0\neq f=t_j^{a_j}-t^c$ for some $1\leq j\leq s$
and some positive integer $a_j$, then $f\notin I(\mathbb{X})$.
\item[(b)] For each pair $1\leq i<j\leq s$, there is $g_{ij}$ in $\mathcal{G}$
such that $g_{ij}=\pm(t_i^{c_{ij}}t_j-t^{b_{ij}})$, where $c_{ij}$ is
a positive integer less than or equal to $q$ and $b_{ij}\in\mathbb{N}^s\setminus\{0\}$. 
\item[(c)] If $I(\mathbb{X})$ is a complete intersection, then 
$s\leq 4$.
\end{itemize}
\end{lemma}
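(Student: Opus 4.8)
The plan is to prove the three parts in order, feeding (a) into (b) and (b) into (c). First I would record two elementary facts that are used repeatedly. Evaluating at the point $[(x^{v_1},\ldots,x^{v_s})]$ with $x=(1,\ldots,1)$ shows that every monomial of $S$ takes the value $1$ there, so \emph{no nonzero monomial of $S$ lies in $I(\mathbb{X})$}; consequently every binomial $t^a-t^b\in I(\mathbb{X})$ is homogeneous, since otherwise its two homogeneous components $t^a$ and $-t^b$ would separately lie in the graded ideal $I(\mathbb{X})$. Recall also that, by the Remark following Theorem~\ref{oct11-14-1}, $I(\mathbb{X})$ is a binomial ideal, and that in the sense used here binomials are pure differences $t^a-t^b$, so the given $\mathcal{G}$ consists of pure differences.

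For part (a), write $f=t_j^{a_j}-t^c$ with $c=(c_1,\ldots,c_s)$ and suppose $0\neq f\in I(\mathbb{X})$; by the above $f$ is homogeneous, so $|c|=a_j>0$. If $c_i=0$ for all $i\neq j$, then $f=t_j^{a_j}-t_j^{c_j}$ and homogeneity forces $a_j=c_j$, i.e. $f=0$, a contradiction. Hence some $c_i>0$ with $i\neq j$, and the clutter hypothesis ${\rm supp}(y^{v_i})\not\subset{\rm supp}(y^{v_j})$ furnishes a variable $y_k\in{\rm supp}(y^{v_i})\setminus{\rm supp}(y^{v_j})$. Taking $x\in K^n$ with $x_k=0$ and $x_\ell=1$ for $\ell\neq k$ gives $x^{v_j}=1$ while $x^{v_i}=0$, so $f$ takes the value $1-0=1$ at the well-defined point $[(x^{v_1},\ldots,x^{v_s})]\in\mathbb{X}$, contradicting $f\in I(\mathbb{X})$.

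For part (b), I would first note that the homogeneous binomial $h_{ij}=t_i^qt_j-t_it_j^q$ lies in $I(\mathbb{X})$ for every monomial parameterization: it factors as $t_it_j\bigl(t_i^{q-1}-t_j^{q-1}\bigr)$, and for each $x$ the scalars $x^{v_i},x^{v_j}\in\mathbb{F}_q$ satisfy $(x^{v_i})^{q-1},(x^{v_j})^{q-1}\in\{0,1\}$, so $h_{ij}$ vanishes on $\mathbb{X}$. The key tool is the connectivity property of pure-difference binomial ideals: if $t^\alpha-t^\beta\in I(\mathbb{X})$, then $t^\alpha$ and $t^\beta$ are joined by a walk $t^\alpha=m_0,m_1,\ldots,m_r=t^\beta$ in which each step has the form $m_\ell=t^\delta t^{\gamma^+}$, $m_{\ell+1}=t^\delta t^{\gamma^-}$ for some generator $t^{\gamma^+}-t^{\gamma^-}\in\mathcal{G}$ and some monomial $t^\delta$. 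Applying this to $t^\alpha=t_i^qt_j$ and $t^\beta=t_it_j^q$ (distinct since $q\geq2$), the first step uses a generator $g\in\mathcal{G}$ one of whose monomials, say $t^{\gamma^+}$, divides $t_i^qt_j$; thus ${\rm supp}(\gamma^+)\subseteq\{i,j\}$ with $\gamma^+_i\leq q$ and $\gamma^+_j\leq1$. By part (a) no nonzero element of $I(\mathbb{X})$ has the shape $t_\ell^{a}-t^{c}$, which rules out $\gamma^+=c\,e_i$ and $\gamma^+=e_j$, and $\gamma^+\neq0$ since $g$ has positive degree. Hence $\gamma^+=c\,e_i+e_j$ with $1\leq c\leq q$, so $g=\pm(t_i^{c}t_j-t^{\gamma^-})$ with $\gamma^-\neq0$, which is the required $g_{ij}$.

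For part (c), suppose $I(\mathbb{X})$ is a complete intersection. As $\mathbb{X}$ is a nonempty finite set of points, $I(\mathbb{X})$ has height $s-1$, so the minimal binomial generating set satisfies $|\mathcal{G}|=s-1$. Part (b) assigns to each of the $\binom{s}{2}$ unordered pairs $\{i,j\}$ a generator $g_{ij}\in\mathcal{G}$ one of whose two monomials equals $t_i^{c_{ij}}t_j$, a monomial of support exactly $\{i,j\}$; since a fixed monomial determines its pair (its support) uniquely and $g_{ij}$ has only two monomials, each generator is assigned to at most two pairs. Therefore $\binom{s}{2}\leq 2|\mathcal{G}|=2(s-1)$, and dividing by $s-1$ yields $s\leq4$. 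The main obstacle is part (b): one must justify the connectivity lemma in the precise form used, which for pure-difference binomial ideals follows by checking that the $K$-span of all differences $t^\mu-t^\nu$ with $\mu,\nu$ connected is an ideal contained in $I(\mathbb{X})$, hence equal to it; the remainder of (b) is then the case analysis on the first move, where part (a) is exactly what eliminates the degenerate leading monomials. Parts (a) and (c) are comparatively routine once (b) is in hand.
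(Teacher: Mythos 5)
Your proof is correct and follows essentially the same route as the paper's: homogeneity plus evaluation at a point with $y_k=0$ for (a), membership of $t_i^qt_j-t_it_j^q$ in $I(\mathbb{X})$ combined with (a) for (b), and the count $s(s-1)/2\leq 2(s-1)$ for (c). The only difference is that you explicitly justify, via the connectivity property of pure-difference binomial ideals, the step the paper asserts without proof (that some term of a generator divides $t_i^qt_j$), which is a welcome addition but not a different argument.
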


\begin{proof} (a): We proceed by contradiction. Assume that $f$ is in
$I(\mathbb{X})$. Since $I(\mathbb{X})$ is a graded binomial ideal, the
binomial $f$ is homogeneous of degree $a_j$, otherwise $t_j^{a_j}$ and
$t^c$ would be in $I(\mathbb{X})$ which is impossible. Thus
$c\in\mathbb{N}^s\setminus\{0\}$. Hence, as $f\neq 0$, we can pick
$t_i\in{\rm supp}(t^c)$ with $i\neq j$. By hypothesis there is
$y_k\in{\rm supp}(y^{v_i})\setminus{\rm supp}(y^{v_j})$, i.e.,
$v_{ik}>0$ and $v_{jk}=0$. Making $y_k=0$ and $y_\ell=1$ for 
$\ell\neq k$, we get that $f(y^{v_1},\ldots,y^{v_s})=1$, a
contradiction.

(b): The binomial $h=t_i^qt_j-t_it_j^q$ vanishes at all points of
$\mathbb{P}^{s-1}$, i.e., $h$ is in $I(\mathbb{X})$. Thus there is $g_{ij}$
in $\mathcal{G}$ such that $t_i^qt_j$ is a multiple of one of the two terms
of the binomial $g_{ij}$. Hence, by part (a), the assertion follows. 

(c): Since $I(\mathbb{X})$ is a complete intersection, there is a set
of binomials $\mathcal{G}=\{g_1,\ldots,g_{s-1}\}$ that generate $I(\mathbb{X})$. The
number of monomials that occur in $g_1,\ldots,g_{s-1}$ is at most
$2(s-1)$. Thanks to part (b) for each pair $1\leq i<j\leq s$, there
is a monomial $t_i^{c_{ij}}t_j$, with $c_{ij}\in\mathbb{N}_+$, and a
binomial $g_{ij}$ in $\mathcal{G}$ such that the monomial
$t_i^{c_{ij}}t_j$ occurs in $g_{ij}$. As there are $s(s-1)/2$ of these
monomials, we get $s(s-1)/2\leq 2(s-1)$. Thus $s\leq 4$.
\end{proof}

\begin{lemma}\label{nov12-14} Let $K$ be a field and let $I$ be the ideal of $S=K[t_1,t_2,t_3,t_4]$ generated by
the binomials $g_1=t_1t_2-t_3t_4,g_2=t_1t_3-t_2t_4,g_3=t_2t_3-t_1t_4$.  
The following hold.
\begin{itemize}
\item[(i)] $\mathcal{G}=\{t_2t_3-t_1t_4, t_1t_3-t_2t_4, t_1t_2-t_3t_4, t_2^2t_4-t_3^2t_4,
      t_1^2t_4-t_3^2t_4, t_3^3t_4-t_3t_4^3\}$ is a Gr\"obner basis of
      $I$ with respect to the GRevLex order $\prec$ on $S$.  
\item[(ii)] If ${\rm char}(K)=2$, then ${\rm rad}(I)\neq I$.
\item[(iii)] If ${\rm char}(K)\neq 2$ and $e_i$ is the $i$-th unit
vector, then
$I=I(\mathbb{X})$, where 
$$
\mathbb{X}=\{[e_1],\,
[e_2],\,[e_3],\,[e_4],\,[(1,-1,-1,1)],\,[(1,1,1,1)],\,[(-1,-1,1,1)],\,[(-1,1,-1,1)]\}.
$$
\end{itemize}
\end{lemma}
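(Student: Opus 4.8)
The plan is to treat the three parts in order, since (i) supplies the initial ideal that drives both (ii) and (iii).

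\emph{Part (i).} I would verify $\mathcal{G}$ is a Gröbner basis by Buchberger's criterion. First fix the GRevLex leading terms: with $t_1\succ t_2\succ t_3\succ t_4$ one gets $\mathrm{in}_\prec(g_3)=t_2t_3$, $\mathrm{in}_\prec(g_2)=t_1t_3$, $\mathrm{in}_\prec(g_1)=t_1t_2$, and for the remaining elements $\mathrm{in}_\prec(t_2^2t_4-t_3^2t_4)=t_2^2t_4$, $\mathrm{in}_\prec(t_1^2t_4-t_3^2t_4)=t_1^2t_4$, $\mathrm{in}_\prec(t_3^3t_4-t_3t_4^3)=t_3^3t_4$, so $\mathrm{in}_\prec(I)=(t_1t_2,t_1t_3,t_2t_3,t_1^2t_4,t_2^2t_4,t_3^3t_4)$. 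Computing the $S$-polynomials of the three generators already produces the three degree-$3$ binomials (e.g. $S(g_2,g_1)=-(t_2^2t_4-t_3^2t_4)$), which simultaneously shows $t_2^2t_4-t_3^2t_4,\,t_1^2t_4-t_3^2t_4\in I$; pairing a generator with one of these and reducing forces $t_3^3t_4-t_3t_4^3\in I$, so $\mathcal{G}$ does generate $I$. It then remains to check that every remaining $S$-polynomial reduces to $0$ modulo $\mathcal{G}$. This is entirely mechanical; the only ``obstacle'' is bookkeeping over the $\binom{6}{2}$ pairs, most of which have coprime leading terms and are handled by Buchberger's first criterion.

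\emph{Part (ii).} Here I would exhibit an explicit element of $\mathrm{rad}(I)\setminus I$. Set $h=t_1t_4+t_2t_4$. Its leading term $t_1t_4$ is a standard monomial (divisible by no generator of $\mathrm{in}_\prec(I)$), so $h$ is already in normal form and $h\neq 0$ forces $h\notin I$. On the other hand, when $\mathrm{char}(K)=2$ the cross term vanishes, $h^2=t_1^2t_4^2+t_2^2t_4^2$, and a direct computation gives $h^2=t_4\big((t_1^2t_4-t_3^2t_4)+(t_2^2t_4-t_3^2t_4)\big)$, which lies in $I$ since both binomials belong to $\mathcal{G}$. Thus $h\in\mathrm{rad}(I)\setminus I$.

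\emph{Part (iii).} Now $\mathrm{char}(K)\neq 2$, so the eight listed points are genuinely distinct (the four sign vectors are pairwise non-proportional precisely because $-1\neq 1$). The inclusion $I\subseteq I(\mathbb{X})$ is the easy direction: one checks by substitution, using the symmetry among $[e_1],\dots,[e_4]$ and among the four sign points, that $g_1,g_2,g_3$ vanish at all eight points. For the reverse inclusion I would argue by Hilbert functions. From $\mathrm{in}_\prec(I)$ in (i), a standard monomial contains at most one of $t_1,t_2,t_3$; counting these by degree gives $\dim_K(S/I)_d=1,4,7,8,8,\dots$, stabilizing at $8$ for $d\ge 3$, so $\deg(S/I)=8=|\mathbb{X}|$. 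Since $I\subseteq I(\mathbb{X})$, there is a graded surjection $S/I\twoheadrightarrow S/I(\mathbb{X})$, whence $\dim_K(S/I(\mathbb{X}))_d\le \dim_K(S/I)_d$ in every degree; equality of the two Hilbert functions in all degrees would give $I=I(\mathbb{X})$.

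It therefore suffices to prove the reverse inequalities $\dim_K(S/I(\mathbb{X}))_d\ge(1,4,7,8,8,\dots)$, i.e. that the evaluation map $S_d\to K^{8}$ has the required rank. Degrees $0$ and $1$ are immediate (the points $[e_1],\dots,[e_4]$ already give rank $4$), and the content is concentrated in degrees $2$ and $3$: one must show exactly three independent quadrics vanish on $\mathbb{X}$ (rank $7$) and that the eight points impose independent conditions on cubics (rank $8$), after which independence persists for all $d\ge 3$. I expect this finite rank verification to be the main obstacle; I would carry it out by producing, for each point, an explicit cubic separating it from the other seven, which certifies rank $8$ in degree $3$, while the degree-$2$ statement follows because the inclusion already forces $\dim_K I(\mathbb{X})_2\le 7$ and the three independent binomials $g_1,g_2,g_3$ give $\ge 3$, hence $=3$. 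Matching Hilbert functions in every degree then yields $I=I(\mathbb{X})$.
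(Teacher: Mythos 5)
Parts (i) and (ii) of your proposal are sound. Part (i) is the same Buchberger verification the paper invokes, and in (ii) your witness $h=t_1t_4+t_2t_4$ (in place of the paper's $t_1t_2-t_1t_3$) works: both of its terms are standard monomials for the initial ideal computed in (i), so $h\notin I$, while in characteristic $2$ one has $h^2=t_4\bigl((t_1^2t_4-t_3^2t_4)+(t_2^2t_4-t_3^2t_4)\bigr)\in I$. In part (iii) you take a genuinely different route from the paper: the paper notes that $\mathbb{X}\cup\{[0]\}$ is a monoid, so $I(\mathbb{X})$ is a binomial ideal by Theorem~\ref{oct11-14-1}, reduces an arbitrary binomial of $I(\mathbb{X})$ to normal form modulo $\mathcal{G}$, and enumerates the finitely many possible nonzero remainders $t^a-t^b$ with $|{\rm supp}(t^a)|,|{\rm supp}(t^b)|\geq 2$, checking that none vanishes on $\mathbb{X}$. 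Your Hilbert-function comparison avoids the binomiality theorem entirely, and your count $\dim_K(S/I)_d=1,4,7,8,8,\dots$ from the standard monomials is correct.

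However, your degree-$2$ step contains a genuine gap: the argument you give is circular. What you must prove there is the lower bound $\dim_K(S/I(\mathbb{X}))_2\geq 7$, equivalently $\dim_K I(\mathbb{X})_2\leq 3$, i.e.\ that no quadric outside the span of $g_1,g_2,g_3$ vanishes at all eight points. The two facts you cite --- the surjection $S/I\twoheadrightarrow S/I(\mathbb{X})$ and the linear independence of $g_1,g_2,g_3$ --- each yield only the opposite inequality $\dim_K(S/I(\mathbb{X}))_2\leq 7$, so ``hence $=3$'' does not follow from ``$\geq 3$''. The missing step is a real (if finite) verification; for instance, one checks that the seven degree-$2$ standard monomials $t_1^2,t_2^2,t_3^2,t_4^2,t_1t_4,t_2t_4,t_3t_4$ are linearly independent as functions on $\mathbb{X}$, which after clearing the rows coming from $[e_1],\dots,[e_4]$ reduces to the invertibility of the $3\times 3$ sign matrix with rows $(1,1,1)$, $(1,-1,-1)$, $(-1,-1,1)$, of determinant $-4\neq 0$ when ${\rm char}(K)\neq 2$. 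The degree-$3$ rank-$8$ check, which you defer to unspecified ``separating cubics'', should likewise be made explicit; with the eight standard monomials of degree $3$ it reduces to a $4\times 4$ block of determinant $\pm 16\neq 0$. With those two computations supplied your argument closes; as written it does not.
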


\begin{proof} (i): Using Buchberger's criterion
\cite[p.~84]{CLO}, it is seen that $\mathcal{G}$ is a Gr\"obner basis
of $I$. 

(ii): Setting $h=t_1t_2-t_1t_3$, we get
$h^2=(t_1t_2)^2-(t_1t_3)^2=t_1t_2g_1+t_1t_3g_2$, where
$g_1=t_1t_2-t_3t_4$ and $g_2=t_1t_3-t_2t_4$. Thus $h\in{\rm rad}(I)$. Using part (i) it is seen
that $h\notin I$.

(iii): As $g_i$ vanishes at all points of $\mathbb{X}$ for $i=1,2,3$, we get the
inclusion $I\subset I(\mathbb{X})$. Since $\mathbb{X}\cup\{0\}$ is a
monoid under componentwise multiplication, by
Theorem~\ref{oct11-14-1}, $I(\mathbb{X})$ is a binomial ideal.  Take
a homogeneous binomial $f$ in $S$ that vanishes at all points of 
$\mathbb{X}$. Let  $h=t^a-t^b$, $a=(a_i)$, $b=(b_i)$, be the residue
obtained by dividing $f$ by $\mathcal{G}$. Hence we can write $f=g+h$,
where $g\in I$ and the terms $t^a$ and $t^b$ are not divisible by any of
the leading terms of $\mathcal{G}$. It suffices to show that $h=0$.
Assume that $h\neq 0$. As $h\in I(\mathbb{X})$ and $[e_i]$ is in $\mathbb{X}$ for
all $i$, we get that $|{\rm supp}(t^a)|\geq 2$ and $|{\rm
supp}(t^b)|\geq 2$. It follows that $h$ has one of the following
forms:
$$
\begin{array}{lll}
h=t_1t_4^i-t_2t_4^i,&h=t_1t_4^i-t_3t_4^i,&h=t_2t_4^i-t_3t_4^i,\\
h=t_3^2t_4^{i-1}-t_3t_4^i,&h=t_3^2t_4^{i-1}-t_2t_4^i,&h=t_3^2t_4^{i-1}-t_1t_4^i,
\end{array}
$$
where $i\geq 1$, a contradiction because none of
these binomials vanishes at all points of $\mathbb{X}$.
\end{proof}

\begin{definition}\label{clutter-def}\rm A {\it hypergraph\/} ${\mathcal H}$ is a pair
$(V(\mathcal{H}),E(\mathcal{H}))$ such that $V(\mathcal{H})$ is a
finite set and $E(\mathcal{H})$ is a subset of the 
set of all subsets of $V(\mathcal{H})$. The elements of
$E(\mathcal{H})$ and $V(\mathcal{H})$ are called {\it edges\/} and
{\it vertices\/}, respectively. 
A hypergraph is {\it simple\/} if $f_1\not\subset f_2$ for
any two edges $f_1,f_2$. A simple hypergraph is called a {\it
clutter\/} and will be denoted by
$\mathcal{C}$ instead of $\mathcal{H}$.  
\end{definition}

One example of a clutter is a graph with the vertices and edges defined in the 
usual way. 

\begin{definition}\label{parameterized-clutter} Let $\mathcal{C}$ be a {\it clutter\/} with vertex set 
$V(\mathcal{C})=\{y_1,\ldots,y_n\}$, let $f_1,\ldots,f_s$ be the edges of $\mathcal{C}$
and let $v_k=\sum_{x_i\in f_k}e_i$ be the {\it characteristic vector\/} of $f_k$
for $1\leq k\leq s$, where $e_i$ is the $i$-th unit vector. The set in the projective space $\mathbb{P}^{s-1}$ parameterized by
$y^{v_1},\ldots,y^{v_s}$, denoted by $\mathbb{X}_\mathcal{C}$,  
is called the {\it projective set parameterized} by $\mathcal{C}$. 
\end{definition}

\begin{lemma}\label{no-quartics}
Let $K=\mathbb{F}_q$ be a finite field with $q\neq 2$ elements, let
$\mathcal{C}$ be a clutter with vertices $y_1,\ldots,y_n$, let 
$v_1,\ldots,v_s$ be the characteristic vectors of the edges of
$\mathcal{C}$ and let $\mathbb{X}_\mathcal{C}$ be the projective set
parameterized by $\mathcal{C}$. If $f=t_it_j-t_kt_\ell\in
I(\mathbb{X}_\mathcal{C})$, with
$i,j,k,l$ distinct, then $y^{v_i}y^{v_j}=y^{v_k}y^{v_\ell}$. 
\end{lemma}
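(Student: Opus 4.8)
The plan is to convert the ideal-membership hypothesis into an identity of monomial functions on $K^n$ and then recover the exponent vectors one coordinate at a time. Put $a=v_i+v_j$ and $b=v_k+v_\ell$; because the $v_m$ are characteristic vectors of edges, $a$ and $b$ have all entries in $\{0,1,2\}$, and the assertion $y^{v_i}y^{v_j}=y^{v_k}y^{v_\ell}$ is precisely $a=b$. Evaluating the homogeneous binomial $f=t_it_j-t_kt_\ell$ at a point $[(x^{v_1},\ldots,x^{v_s})]$ of $\mathbb{X}_\mathcal{C}$, the hypothesis $f\in I(\mathbb{X}_\mathcal{C})$ says that $x^{a}=x^{b}$ for every $x\in K^n$ for which the point is well defined, i.e.\ for which $x^{v_m}\neq 0$ for some $m$. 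All of the work is in choosing good specializations of $x$.

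First I would prove that $a$ and $b$ have equal support, i.e.\ ${\rm supp}(y^{v_i})\cup{\rm supp}(y^{v_j})={\rm supp}(y^{v_k})\cup{\rm supp}(y^{v_\ell})$. If not, by symmetry there is a variable $y_r$ with $a_r>0$ and $b_r=0$. Specialize $x_r=0$ and $x_p=1$ for $p\neq r$. Since $b_r=0$, the variable $y_r$ lies in neither ${\rm supp}(y^{v_k})$ nor ${\rm supp}(y^{v_\ell})$; in particular the $r$-th entry of $v_k$ is $0$, so $x^{v_k}=1\neq 0$ and the resulting point belongs to $\mathbb{X}_\mathcal{C}$. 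For this $x$ one has $x^{a}=0$ (as $a_r\geq 1$) while $x^{b}=1$, contradicting $x^a=x^b$. Hence ${\rm supp}(a)\subseteq{\rm supp}(b)$, and the reverse inclusion is identical.

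With equal supports secured, the remaining and most delicate point is to separate multiplicity $1$ from multiplicity $2$, and this is exactly where the hypothesis $q\neq 2$ enters. Fix $r$ with $a_r,b_r\in\{1,2\}$ (at every other coordinate $a$ and $b$ already agree, both being $0$), choose $\gamma\in\mathbb{F}_q^{*}$ with $\gamma\neq 1$ --- which exists precisely because $q\geq 3$ --- and specialize $x_r=\gamma$, $x_p=1$ for $p\neq r$. Every coordinate is nonzero, so the point is well defined, and $x^a=x^b$ gives $\gamma^{a_r}=\gamma^{b_r}$, i.e.\ $\gamma^{|a_r-b_r|}=1$ with $|a_r-b_r|\in\{0,1\}$. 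Were $|a_r-b_r|=1$ we would get $\gamma=1$, a contradiction; hence $a_r=b_r$. Ranging over all $r$ yields $a=b$. The main obstacle is this final step: the support computation reduces the problem to distinguishing $1$ from $2$ rather than $0$ from a nonzero exponent, and that distinction is visible only when $\mathbb{F}_q^{*}$ contains an element other than $1$. Over $\mathbb{F}_2$ the functions $t$ and $t^2$ coincide and the statement genuinely fails, in line with the characteristic-two pathology recorded in Lemma~\ref{nov12-14}(ii).
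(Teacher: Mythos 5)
Your proof is correct and follows essentially the same route as the paper's: a zero specialization to show the two products have the same support, followed by a specialization at a nonidentity element of $\mathbb{F}_q^{*}$ (the paper uses a generator $\beta$, but any $\gamma\neq 1$ works, as you note) to distinguish exponent $1$ from exponent $2$, which is where $q\neq 2$ is used. The only cosmetic difference is that you argue coordinatewise on $a=v_i+v_j$ while the paper phrases the second step as the equality of the intersections of supports, which is the same statement.
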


\begin{proof} For simplicity assume that
$f=t_1t_2-t_3t_4$.  Setting $A_1={\rm supp}(y^{v_1}y^{v_2})$,
$A_2={\rm supp}(y^{v_3}y^{v_4})$, $S_1={\rm supp}(y^{v_1})\cap {\rm
supp}(y^{v_2})$ and 
$S_2={\rm supp}(y^{v_3})\cap {\rm supp}(y^{v_4})$, it suffices to show
the equalities $A_1=A_2$ and $S_1=S_2$. If $A_1\not\subset A_2$, pick
$y_k\in A_1\setminus A_2$. Making $y_k=0$ and $y_\ell=1$ for
$\ell\neq k$, and using that $f$ vanishes on $\mathbb{X}_\mathcal{C}$, we get 
that $f(y^{v_1},\ldots,y^{v_4})=-1=0$, 
a contradiction. Thus $A_1\subset A_2$. The other inclusion follows by a similar
reasoning. Next we show the equality $S_1=S_2$. If $S_1\not\subset
S_2$, pick a variable $y_k\in S_1\setminus S_2$. Let $\beta$ be a
generator of the cyclic group $\mathbb{F}_q^*=\mathbb{F}_q\setminus\{0\}$. Making $y_k=\beta$, 
$y_\ell=1$ for $\ell\neq k$, and using that $f$ vanishes on
$\mathbb{X}_\mathcal{C}$ and the equality $A_1=A_2$, we get that
$f(y^{v_1},\ldots,y^{v_4})=\beta^2-\beta=0$. 
Hence  $\beta^2=\beta$ and $\beta=1$, a contradiction because $q\neq 2$. Thus $S_1\subset
S_2$. The other inclusion follows by a similar argument. 
\end{proof}

\begin{remark}\label{az12-macaulay2} 
Let $K=\mathbb{F}_q$ be a finite field with $q$ odd and let 
$\mathbb{X}$ be the set of clutter type in $\mathbb{P}^3$ parameterized by
the following monomials:
\begin{eqnarray*}
y^{v_1}&=&y_1^{q-1}y_2^ry_3^ry_4^{q-1}y_5^{q-1}y_6^{q-1}y_7^{q-1},\\ 
y^{v_2}&=&y_1^ry_2^ry_3^{q-1}y_4^{q-1}y_5^{q-1}y_6^{q-1}y_8^{q-1},\\
y^{v_3}&=&y_2^{q-1}y_4^{q-1}y_1^ry_3^ry_5^{q-1}y_7^{q-1}y_8^{q-1},\\
y^{v_4}&=&y_1^{q-1}y_2^{q-1}y_3^{q-1}y_4^{q-1}y_6^{q-1}y_7^{q-1}y_8^{q-1},
\end{eqnarray*}
where $r=(q-1)/2$. Then 
$$
\mathbb{X}=\{[e_1],\,
[e_2],\,[e_3],\,[e_4],\,[(1,-1,-1,1)],\,[(1,1,1,1)],\,[(-1,-1,1,1)],\,[(-1,1,-1,1)]\},
$$
$|\mathbb{X}|=8$ and 
$I(\mathbb{X})=(t_1t_2-t_3t_4,t_1t_3-t_2t_4,t_2t_3-t_1t_4)$. 
\end{remark}

Below we show that the set $\mathbb{X}$ of
Remark~\ref{az12-macaulay2} cannot be parameterized by a clutter.  

\begin{remark}\label{not-parameterized-by-clutter} 
Let $K=\mathbb{F}_q$ be a field with $q\neq 2$ elements. Then 
the ideal 
$$I=(t_1t_2-t_3t_4,t_1t_3-t_2t_4,t_2t_3-t_1t_4)$$
cannot be the vanishing ideal of a set in $\mathbb{P}^3$
parameterized by a clutter. Indeed assume that there is a clutter
$\mathcal{C}$ such that $I=I(\mathbb{X}_\mathcal{C})$ and
$\mathbb{X}_\mathcal{C}\subset\mathbb{P}^3$. If $v_1,\ldots,v_4$ are
the characteristic 
vectors of the edges of $\mathcal{C}$. Then,
by Lemma~\ref{no-quartics}, we get 
$v_1+v_2=v_3+v_4$, $v_1+v_3=v_2+v_4$ and $v_2+v_3=v_1+v_4$. It follows
that $v_1=v_2=v_3=v_4$, a contradiction.
\end{remark}

\begin{lemma}\label{nov12-14-1} Let $K$ be a field and 
let $I$ be the ideal of $S=K[t_1,t_2,t_3]$ generated by
the binomials $g_1=t_1t_2-t_2t_3,g_2=t_1t_3-t_2t_3$.  
The following hold.
\begin{itemize}
\item[(i)] $\mathcal{G}=\{t_1t_3-t_2t_3,\, t_1t_2-t_2t_3,\, t_2^2t_3-t_2t_3^2\}$ is a Gr\"obner basis of
      $I$ with respect to the GRevLex order $\prec$ on $S$.  
\item[(ii)] $I=I(\mathbb{X})$, where 
$\mathbb{X}=\{[e_1],\,
[e_2],\,[e_3],\, [(1,1,1)]\}$.
\end{itemize}
\end{lemma}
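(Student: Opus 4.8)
The plan is to handle the two parts in turn, mirroring the proof of Lemma~\ref{nov12-14}. For (i), under the GRevLex order with $t_1\succ t_2\succ t_3$ the leading terms of the three binomials in $\mathcal{G}$ are their first written terms $t_1t_3$, $t_1t_2$ and $t_2^2t_3$. Since $t_2^2t_3-t_2t_3^2=t_3g_1-t_2g_2$, all of $\mathcal{G}$ lies in $I=(g_1,g_2)$, and I would then apply Buchberger's criterion \cite[p.~84]{CLO}: the $S$-polynomial of $g_1,g_2$ equals, up to sign, the third generator $t_2^2t_3-t_2t_3^2$ (which is why it is adjoined), while the two remaining $S$-polynomials reduce to $0$ after a short division by $\mathcal{G}$. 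This step is routine.

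For (ii), the inclusion $I\subset I(\mathbb{X})$ follows at once by evaluating $g_1=t_1t_2-t_2t_3$ and $g_2=t_1t_3-t_2t_3$ at $[e_1],[e_2],[e_3],[(1,1,1)]$. For the reverse inclusion, I first note that $\mathbb{X}\cup\{[0]\}$ is a submonoid of $\mathbb{P}^2\cup\{[0]\}$ with identity $[(1,1,1)]$: indeed $[e_i]\cdot[e_j]=[0]$ for $i\neq j$, while $[e_i]\cdot[e_i]=[e_i]$ and $[e_i]\cdot[(1,1,1)]=[e_i]$. Hence $I(\mathbb{X})$ is a binomial ideal by Theorem~\ref{oct11-14-1}, so it suffices to show that every homogeneous binomial $f\in I(\mathbb{X})$ lies in $I$. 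Dividing $f$ by the Gr\"obner basis $\mathcal{G}$ leaves a binomial remainder $h=t^a-t^b$ with $f-h\in I$ and with $t^a,t^b$ standard; since then $h\in I(\mathbb{X})$, it is enough to prove $h=0$.

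The crux is the following enumeration. The standard monomials, those divisible by none of $t_1t_2,t_1t_3,t_2^2t_3$, are exactly the powers $t_1^a,t_2^b,t_3^c$ together with $t_2t_3^c$ for $c\geq 1$; in particular the only standard monomials of support at least two are the $t_2t_3^c$. Suppose $h\neq 0$. Evaluating at $[e_1],[e_2],[e_3]\in\mathbb{X}$ forces both $t^a$ and $t^b$ to have support at least two: were $t^a$ a power $t_i^k$ of a single variable, then $h([e_i])=1\neq 0$ unless $t^b$ is likewise a power of $t_i$, in which case homogeneity forces $t^a=t^b$ and $h=0$. Thus both terms have the form $t_2t_3^c$, and homogeneity again yields $t^a=t^b$, contradicting $h\neq 0$. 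Therefore $f\in I$, whence $I=I(\mathbb{X})$.

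The main obstacle is this reverse inclusion in (ii), and it has two delicate points. First is the monoid verification: without the point $[(1,1,1)]$ the set $\mathbb{X}\cup\{[0]\}$ would have no identity, Theorem~\ref{oct11-14-1} would not apply, and one could not reduce to binomials, as witnessed by the monomial $t_1t_2$ lying in the vanishing ideal of $\{[e_1],[e_2],[e_3]\}$ but not in $I$. Second is the exact determination of the standard monomials, on which the final contradiction rests, since the whole argument depends on there being no nontrivial standard binomial vanishing on $\mathbb{X}$.
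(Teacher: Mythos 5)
Your proposal is correct and follows essentially the same route as the paper, whose own proof of this lemma simply defers to ``the arguments given in Lemma~\ref{nov12-14}'': Buchberger's criterion for (i), and for (ii) the inclusion $I\subset I(\mathbb{X})$ by evaluation, reduction to binomials via Theorem~\ref{oct11-14-1}, division by $\mathcal{G}$, and elimination of a nonzero standard remainder using the points $[e_i]$ and homogeneity. You merely make explicit the details (the monoid check, the list of standard monomials) that the paper leaves implicit; note only that your appeal to Theorem~\ref{oct11-14-1} over an arbitrary field $K$ inherits the same slight mismatch with that theorem's finite-field hypothesis that is already present in the paper's own argument.
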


\begin{proof} It follows using the arguments given in
Lemma~\ref{nov12-14}.
\end{proof}

\begin{remark}\label{az13-mac2} 
Let $K=\mathbb{F}_q$ be a finite field with $q$ elements and let 
$\mathbb{X}$ be the projective set in $\mathbb{P}^2$ parameterized by
the following monomials:
$$
y^{v_1}=y_1^{q-1}y_2^{q-1},\, y^{v_2}=y_2^{q-1}y_3^{q-1},\, 
y^{v_3}=y_1^{q-1}y_3^{q-1}.
$$
Then $\mathbb{X}=\{[e_1],\,
[e_2],\,[e_3],\,[(1,1,1)]\}$ and 
$I(\mathbb{X})=(t_1t_2-t_2t_3,\, t_1t_3-t_2t_3)$. 
\end{remark}

\begin{definition}\label{projectivetorus-def} The set 
$T=\{[(x_1,\ldots,x_s)]\in\mathbb{P}^{s-1}\vert\, x_i\in
K^*\mbox{ for all }i\}$ is called a {\it projective torus} 
in $\mathbb{P}^{s-1}$. 
\end{definition}

\begin{lemma}\label{nov14-14} Let $\beta$ be a generator of
$\mathbb{F}_q^*$ and $0\neq r\in\mathbb{N}$. Suppose $s=2$.  
If $I=(t_1^{r+1}t_2-t_1t_2^{r+1})$ and  $r$ divides $q-1$,
then $I=I(\mathbb{X})$, where $\mathbb{X}$ is the set of clutter type
in $\mathbb{P}^1$ parameterized 
by $y_1^{q-1}$, $y_2^{q-1}y_3^k$ and $r={\rm o}(\beta^k)$. 
\end{lemma}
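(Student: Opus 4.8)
The plan is to describe $\mathbb{X}$ as an explicit finite set of points of $\mathbb{P}^1$, and then to exploit the fact that in $\mathbb{P}^1$ the vanishing ideal of a finite set of $K$-rational points is principal, so that computing $I(\mathbb{X})$ reduces to a single polynomial factorization.

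First I would compute $\mathbb{X}$. A point of $\mathbb{X}$ has the form $[(x_1^{q-1},\, x_2^{q-1}x_3^k)]$ with $(x_1,x_2,x_3)\in K^3$ not mapping to $[0]$. Since $x^{q-1}\in\{0,1\}$ for every $x\in\mathbb{F}_q$, the first coordinate is $0$ or $1$; and when $x_3\neq 0$ the factor $x_3^k$ runs exactly over the cyclic subgroup $\langle\beta^k\rangle\subseteq\mathbb{F}_q^*$, which has order $r={\rm o}(\beta^k)$ and, because $r\mid q-1$, coincides with the set $\mu_r=\{\zeta\in\mathbb{F}_q^*:\zeta^r=1\}$ of $r$-th roots of unity. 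Running through the cases ($x_1=0$ versus $x_1\neq 0$, and $x_3=0$ versus $x_3\neq 0$) I expect to obtain
$$
\mathbb{X}=\{[e_1],\ [e_2]\}\cup\{[(1,\zeta)]:\zeta\in\mu_r\},
$$
a set of exactly $r+2$ distinct points; note also that ${\rm supp}(y^{v_1})=\{y_1\}$ and ${\rm supp}(y^{v_2})=\{y_2,y_3\}$, so the clutter-type hypothesis is visibly satisfied.

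Next I would record that, since $\mathbb{X}\subset\mathbb{P}^1$ consists of finitely many $K$-rational points, $I(\mathbb{X})$ is principal. Indeed, the ideal of a single point $[a:b]$ is generated by the linear form $\ell_{[a:b]}=b\,t_1-a\,t_2$: a homogeneous $F$ vanishes at $[a:b]$ if and only if $\ell_{[a:b]}$ divides $F$, by homogeneity and division in $K[t_1,t_2]$. For distinct points these linear forms are pairwise non-proportional, hence pairwise coprime in the UFD $S=K[t_1,t_2]$. Therefore
$$
I(\mathbb{X})=\bigcap_{[a:b]\in\mathbb{X}}(\ell_{[a:b]})=\Bigl(\,\prod_{[a:b]\in\mathbb{X}}\ell_{[a:b]}\Bigr).
$$

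Finally I would evaluate this product. The points $[e_1]=[1:0]$ and $[e_2]=[0:1]$ contribute the forms $t_2$ and $t_1$, and each $[(1,\zeta)]=[1:\zeta]$ contributes $t_2-\zeta t_1$. Using that $\mu_r$ is exactly the set of roots of $X^r-1$ (because $r\mid q-1$), I get $\prod_{\zeta\in\mu_r}(t_2-\zeta t_1)=t_2^r-t_1^r$, so
$$
\prod_{[a:b]\in\mathbb{X}}\ell_{[a:b]}=t_1\,t_2\,(t_2^r-t_1^r)=-(t_1^{r+1}t_2-t_1t_2^{r+1}),
$$
which generates $I$. Hence $I(\mathbb{X})=I$. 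The only delicate points are the careful case analysis describing $\mathbb{X}$ (so that no point is missed or double-counted) and the identification $\prod_{\zeta\in\mu_r}(X-\zeta)=X^r-1$, which is precisely where the hypothesis $r\mid q-1$ is essential.
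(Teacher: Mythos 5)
Your proof is correct, but it follows a genuinely different route from the paper's. The paper never computes $\mathbb{X}$ explicitly: it first checks $(f)\subset I(\mathbb{X})$ by evaluating $f=t_1^{r+1}t_2-t_1t_2^{r+1}$ at a generic parameterized point, and then for the reverse inclusion invokes Theorem~\ref{oct11-14-1} (from the companion preprint on binomial vanishing ideals) to reduce to a binomial $g=t_1^{a_1}t_2^{a_2}-t_1^{b_1}t_2^{b_2}$, uses gradedness and the membership of $[e_1],[e_2]$ to factor out $t_1^{a_1}t_2^{b_2}$, and finally evaluates at $y_3=\beta$, $y_1=y_2=1$ to conclude $r\mid a_2-b_2$ and hence $g\in(f)$. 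You instead identify $\mathbb{X}=\{[e_1],[e_2]\}\cup\{[(1,\zeta)]:\zeta\in\mu_r\}$ and exploit the special geometry of $\mathbb{P}^1$: the vanishing ideal of a finite set of rational points is the principal ideal generated by the product of the pairwise non-proportional linear forms, and $\prod_{\zeta\in\mu_r}(t_2-\zeta t_1)=t_2^r-t_1^r$ since $r\mid q-1$. Your argument is more elementary and self-contained (it avoids the binomial-ideal criterion entirely) and gives $|\mathbb{X}|=r+2$ as a byproduct, which is useful for the coding-theoretic applications mentioned in the introduction; the paper's argument, on the other hand, is the one that transports to the slightly more general Lemma~\ref{nov14-14-1}, where $\mathbb{Y}$ is only assumed to be a monoid rather than an explicitly parameterized set, and it matches the binomial-ideal technique used uniformly throughout the paper. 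Both proofs are complete; the only points you should make sure to spell out are the one-line verification that $[e_1]$ and $[e_2]$ are actually attained by the parameterization and the standard fact that a homogeneous form on $\mathbb{P}^1$ vanishing at $[a:b]$ is divisible by $bt_1-at_2$.
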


\begin{proof} We set $f=t_1^{r+1}t_2-t_1t_2^{r+1}$. Take a point
$P=[(x_1^{q-1},x_2^{q-1}x_3^k)]$ in $\mathbb{X}$. Then
$$
f(P)=(x_1^{q-1})^{r+1}(x_2^{q-1}x_3^k)-(x_1^{q-1})(x_2^{q-1}x_3^k)^{r+1}.
$$
We may assume $x_1\neq 0$ and $x_2\neq 0$. Then
$f(P)=x_3^k-(x_3^k)^{r+1}$. If $x_3\neq 0$, then $x_3=\beta^i$ for
some $i$ and $(x_3^k)^{r+1}=x_3^k$, that is, $f(P)=0$. Therefore one
has the inclusion $(f)\subset I(\mathbb{X})$.

Next we show the inclusion $I(\mathbb{X})\subset (f)$. By 
Theorem~\ref{oct11-14-1}, $I(\mathbb{X})$ is a binomial ideal. Take
a non-zero binomial $g=t_1^{a_1}t_2^{a_2}-t_1^{b_1}t_2^{b_2}$ that
vanishes on $\mathbb{X}$. Then $a_1+a_2=b_1+b_2$ because $I(\mathbb{X})$
is graded. We may assume that $b_1>a_1$ and $a_2>b_2$. We may also
assume that $a_1>0$ and $b_2>0$ because
$\{[e_1],[e_2]\}\subset\mathbb{X}$. Then
$g=t_1^{a_1}t_2^{b_2}(t_2^{a_2-b_2}-t_1^{b_1-a_1})$. As $g$ vanishes
on $\mathbb{X}$, making  $y_3=\beta$ and $y_1=y_2=1$, we get
$(\beta^k)^{a_2-b_2}=1$. 
Hence $a_2-b_2=\lambda r$ for some
$\lambda\in\mathbb{N}_+$, where $r={o}(\beta^k)$. Thus
$t_2^{a_2-b_2}-t_1^{b_1-a_1}$ is equal to $t_2^{\lambda r}-t_1^{\lambda
r}\in (t_1^r-t_2^r)$. Therefore $g$ is a multiple of
$f=t_1t_2(t_1^r-t_2^r)$ because $a_1>0$ and $b_2>0$. Thus $g\in(f)$.  
\end{proof}

\begin{lemma}\label{nov14-14-1} Let $K=\mathbb{F}_q$ be a finite field.
If $\{[e_1],[e_2]\}\subset\mathbb{Y}\subset\mathbb{P}^1$ and
$\mathbb{Y}\cup\{0\}$ is a monoid under componentwise multiplication,
then there is $0\neq r\in\mathbb{N}$ such that
$I(\mathbb{Y})=(t_1^{r+1}t_2-t_1t_2^{r+1})$ and $r$ divides $q-1$.
\end{lemma}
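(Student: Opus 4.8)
The plan is to first describe $\mathbb{Y}$ explicitly, then reduce the computation of $I(\mathbb{Y})$ to the same divisibility argument used in the proof of Lemma~\ref{nov14-14}.

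First I would normalize the points of $\mathbb{Y}$. The only points of $\mathbb{P}^1$ with a vanishing coordinate are $[e_1]$ and $[e_2]$, so every other point can be written uniquely as $[(1,c)]$ with $c\in\mathbb{F}_q^*$. Setting
$$G=\{c\in\mathbb{F}_q^*\,:\,[(1,c)]\in\mathbb{Y}\},$$
we get $\mathbb{Y}=\{[e_1],[e_2]\}\cup\{[(1,c)]\,:\,c\in G\}$. The key structural step is to show that $G$ is a subgroup of $\mathbb{F}_q^*$. Since $[(1,c)]\cdot[(1,c')]=[(1,cc')]$, closure of the monoid $\mathbb{Y}\cup\{[0]\}$ under componentwise multiplication forces $G$ to be closed under multiplication, and the identity $[\mathbf{1}]=[(1,1)]$ of $\mathcal{S}$ lies in $\mathbb{Y}$, so $1\in G$. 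A nonempty finite multiplicatively closed subset of a group is a subgroup, hence $G$ is a subgroup of $\mathbb{F}_q^*$. As $\mathbb{F}_q^*$ is cyclic of order $q-1$, the group $G$ is cyclic of some order $r$ with $r\mid q-1$ and $r\geq 1$ (so $r\neq 0$); moreover $G=\{c\in\mathbb{F}_q^*\,:\,c^r=1\}$.

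With $r$ in hand I set $f=t_1^{r+1}t_2-t_1t_2^{r+1}=t_1t_2(t_1^r-t_2^r)$. The inclusion $(f)\subset I(\mathbb{Y})$ is immediate: $f$ vanishes at $[e_1]$ and $[e_2]$ because of the factor $t_1t_2$, and at $[(1,c)]$ with $c\in G$ because $c^r=1$. For the reverse inclusion I would invoke Theorem~\ref{oct11-14-1} to conclude that $I(\mathbb{Y})$ is a binomial ideal, and take a nonzero homogeneous binomial $g=t_1^{a_1}t_2^{a_2}-t_1^{b_1}t_2^{b_2}$ vanishing on $\mathbb{Y}$, with $a_1+a_2=b_1+b_2$; after relabeling I may assume $a_2>b_2$ and $b_1>a_1$. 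Evaluating $g$ at $[e_2]$ and at $[e_1]$ forces $a_1>0$ and $b_2>0$, so I can factor $g=t_1^{a_1}t_2^{b_2}(t_2^{a_2-b_2}-t_1^{b_1-a_1})$. Evaluating at $[(1,c)]$ for $c\in G$ yields $c^{a_2-b_2}=1$ for every $c\in G$, hence the order $r$ of $G$ divides $a_2-b_2$; writing $a_2-b_2=b_1-a_1=\lambda r$ and using that $t_2^{\lambda r}-t_1^{\lambda r}$ is divisible by $t_2^r-t_1^r$, I conclude that $g$ is a multiple of $f$, so $g\in(f)$. This gives $I(\mathbb{Y})=(f)$.

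The main obstacle is really just the structural observation in the first step: everything afterward is the same divisibility bookkeeping as in Lemma~\ref{nov14-14}. Here, however, the subgroup $G$ is not handed to me as $\langle\beta^k\rangle$, so I must extract it from the monoid hypothesis, argue that its order $r$ divides $q-1$, and verify that this same $r$ governs the vanishing condition $c^{a_2-b_2}=1$ on all of $G$.
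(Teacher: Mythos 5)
Your proposal is correct and follows essentially the same route as the paper: the paper also passes to $X=\mathbb{Y}\cap T$ (your $\{[(1,c)]:c\in G\}$), notes that the corresponding subset $H$ of $\mathbb{F}_q^*$ is a finite cancellative monoid and hence a cyclic subgroup of order $r\mid q-1$, and then runs the identical factor-and-divisibility argument from Lemma~\ref{nov14-14} on a binomial generator of $I(\mathbb{Y})$. Your explicit justification that the monoid identity must be $[(1,1)]$ (so $1\in G$) is a point the paper leaves implicit, but the substance is the same.
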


\begin{proof} We set $f=t_1^{r+1}t_2-t_1t_2^{r+1}$ 
and $X=\mathbb{Y}\cap T$, where $T$ is a projective torus in $\mathbb{P}^1$.
The set $X$ is a group, under componentwise multiplication, because $X$
is a finite monoid and the cancellation laws hold. By 
Theorem~\ref{oct11-14-1}, $I(\mathbb{Y})$ is a binomial ideal.
Clearly $(f)\subset I(\mathbb{Y})$. To show the other inclusion take
a non-zero binomial $g=t_1^{a_1}t_2^{a_2}-t_1^{b_1}t_2^{b_2}$ that
vanish on $\mathbb{Y}$. Then $a_1+a_2=b_1+b_2$ because $I(\mathbb{Y})$
is graded. We may assume that $b_1>a_1$ and $a_2>b_2$. We may also
assume that $a_1>0$ and $b_2>0$ because
$\{[e_1],[e_2]\}\subset\mathbb{X}$. Then
$g=t_1^{a_1}t_2^{b_2}(t_2^{a_2-b_2}-t_1^{b_1-a_1})$. The subgroup of
$\mathbb{F}_q^*$ given by $H=\{\xi\in\mathbb{F}_q^*\, \vert\,
[(1,\xi)]\in X\}$ has order $r=|X|$. Pick a generator $\beta$ of the cyclic
group $\mathbb{F}_q^*$. Then $H$ is a cyclic group generated by
$\beta^k$ for some $k\geq 0$. As $g$ vanishes on $\mathbb{Y}$, one has
that $t_2^{a_2-b_2}-t_1^{b_1-a_1}$ vanishes on $X$. In particular
$(\beta^k)^{a_2-b_2}=1$. Hence $a_2-b_2=\lambda r$ for some
$\lambda\in\mathbb{N}_+$, where $r={o}(\beta^k)=|X|$. 
Proceeding as in the proof of
Lemma~\ref{nov14-14} one derives that $g\in(f)$. Noticing that $T$ has
order $q-1$, we obtain that $r$ divides $q-1$.
\end{proof}

\begin{definition}\label{ci-def} An ideal $I\subset S$ is called a {\it complete intersection\/} if 
there exists $g_1,\ldots,g_{r}$ in $S $ such that $I=(g_1,\ldots,g_{r})$, 
where $r$ is the height of $I$. 
\end{definition}

Recall that a graded ideal $I$ is a complete
intersection if and only if $I$ is generated by a homogeneous regular 
sequence with ${\rm ht}(I)$ elements (see \cite[Proposition~2.3.19,
Lemma~2.3.20]{monalg-rev}). 

\begin{theorem}\label{azucena-vila-ci} Let $K=\mathbb{F}_q$ be a
finite field and let $\mathbb{X}$ be a set in
$\mathbb{P}^{s-1}$ parameterized by a
set of monomials $y^{v_1},\ldots,y^{v_s}$ such that ${\rm supp}(y^{v_i})\not\subset{\rm supp}(y^{v_j})$
for any $i\neq j$. Then $I(\mathbb{X})$ is a complete intersection if
and only if $s\leq 4$ and, up to permutation of variables,
$I(\mathbb{X})$ has one of the following forms:
\begin{itemize}
\item[(i)] $s=4$, $q$ is odd and 
$I=(t_1t_2-t_3t_4,t_1t_3-t_2t_4,t_2t_3-t_1t_4)$.
\item[(ii)] $s=3$ and $I=(t_1t_2-t_2t_3,t_1t_3-t_2t_3)$.
\item[(iii)] $s=2$ and $I=(t_1^{r+1}t_2-t_1t_2^{r+1})$, where
$0\neq r\in\mathbb{N}$ is a divisor of $q-1$.  
\item[(iv)] $s=1$ and $I=(0)$.
\end{itemize}
\end{theorem}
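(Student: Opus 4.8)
The plan is to prove the two directions separately, leaning on the structural lemmas already established. For the \emph{if} direction, one simply checks that each listed ideal really is a complete intersection of the correct height. In case (i) the three generators cut out a set $\mathbb{X}$ of eight points in $\mathbb{P}^3$ (by Lemma~\ref{nov12-14}(iii), valid since $q$ is odd); the height of the vanishing ideal of a finite set of points in $\mathbb{P}^3$ is $s-1=3$, matching the three generators. In case (ii) Lemma~\ref{nov12-14-1}(ii) identifies the ideal as $I(\mathbb{X})$ for four points in $\mathbb{P}^2$, giving height $2$ with two generators. Case (iii) follows from Lemma~\ref{nov14-14}, where a single generator has height $s-1=1$. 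Case (iv) is trivial. In each case one must also exhibit a parameterization of clutter type realizing $\mathbb{X}$, which the preceding remarks (Remark~\ref{az12-macaulay2}, Remark~\ref{az13-mac2}) and Lemma~\ref{nov14-14} supply explicitly.

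The \emph{only if} direction is the substantive part, and I would organize it by the value of $s$. Lemma~\ref{nov10-14}(c) immediately forces $s\leq 4$, so there are only finitely many cases. The cases $s=1$ and $s=2$ are quick: for $s=1$ the vanishing ideal is $(0)$; for $s=2$, since $\mathbb{X}$ contains $[e_1]$ and $[e_2]$ and $\mathbb{X}\cup\{0\}$ is a monoid, Lemma~\ref{nov14-14-1} gives exactly the form in (iii), with $r$ a divisor of $q-1$. The heart of the argument is $s=3$ and $s=4$, where I would exploit Lemma~\ref{nov10-14}(b): for every pair $i<j$ there is a minimal binomial generator $g_{ij}=\pm(t_i^{c_{ij}}t_j - t^{b_{ij}})$. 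When $I(\mathbb{X})$ is a complete intersection the counting bound from Lemma~\ref{nov10-14}(c) is \emph{tight} (equality $s(s-1)/2 = 2(s-1)$ holds exactly at $s=4$, and for $s=3$ one has $3\leq 4$ with little slack), which severely constrains how the $s-1$ generators can simultaneously carry all $s(s-1)/2$ required monomials $t_i^{c_{ij}}t_j$. I would push this pigeonhole analysis to conclude that each $c_{ij}=1$ and that the generators must, after permuting variables, take the displayed quadratic binomial shapes.

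For $s=4$ specifically, the tightness means each of the three generators must account for exactly four of the six monomials $t_i^{c_{ij}}t_j$, i.e.\ each generator is a difference of two such degree-$(c+1)$ monomials with disjoint roles; a short case check forces $c_{ij}=1$ for all pairs and pins the generators down to $t_1t_2-t_3t_4$, $t_1t_3-t_2t_4$, $t_2t_3-t_1t_4$ up to relabeling. One then invokes Lemma~\ref{nov12-14}(ii): if $\operatorname{char}(K)=2$ this ideal is not radical, hence cannot be a vanishing ideal (vanishing ideals over a field are always radical), so $q$ must be odd, yielding case (i). For $s=3$, the analogous argument reduces the generators, after relabeling, to the pair in case (ii). I anticipate that the main obstacle is precisely this combinatorial reduction in the $s=4$ case: translating the monomial-counting equality into the exact three generators requires carefully ruling out higher exponents $c_{ij}>1$ and mixed configurations, using part (a) of Lemma~\ref{nov10-14} (that no pure-power binomial $t_j^{a}-t^c$ lies in $I(\mathbb{X})$) to eliminate degenerate generator shapes, and then matching against Lemma~\ref{no-quartics} and Remark~\ref{not-parameterized-by-clutter} to confirm consistency. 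The char-$2$ exclusion via non-radicality is the clean capstone that separates (i) from the impossible even-characteristic analogue.
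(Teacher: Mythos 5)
Your overall strategy coincides with the paper's: bound $s$ by Lemma~\ref{nov10-14}(c), settle $s=1,2$ via the monoid structure and Lemma~\ref{nov14-14-1}, and for $s=3,4$ combine Lemma~\ref{nov10-14}(b) with monomial counting to constrain the binomial generators, finishing with Lemma~\ref{nov12-14} to force $q$ odd. The genuine gap is in the step you yourself flag as the main obstacle, and the details do not go the way you predict. For $s=4$ the pigeonhole does force $c_{ij}=a_{ij}=1$, but it does \emph{not} pin the generators down to the single configuration $t_1t_2-t_3t_4$, $t_1t_3-t_2t_4$, $t_2t_3-t_1t_4$: the six quadratics $t_it_j$ can be paired into three binomials in several essentially different ways, and three spurious configurations (up to permutation) survive the count and must be eliminated by ideal-theoretic arguments. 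One, $t_1(t_2-t_3),\ t_1t_4-t_2t_3,\ t_4(t_2-t_3)$, is excluded because it generates an ideal of height $2$; the other two, e.g.\ $t_1(t_2-t_3),\ t_4(t_1-t_3),\ t_2(t_3-t_4)$, are excluded by computing a Gr\"obner basis and exhibiting a binomial $h$ with $h\notin I$ but $h^2\in I$, so the ideal is not radical and cannot be a vanishing ideal. This non-radicality argument is distinct from the char-$2$ exclusion you invoke for case (i): it is needed in \emph{every} characteristic for those configurations, and nothing in your sketch supplies it.

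Two further inaccuracies. For $s=3$ the count (six required monomial occurrences fitting into at most four monomials) does \emph{not} force all $c_{ij}=1$; the configuration $g_1=t_1t_3-t_2t_3$, $g_2=t_1^{c}t_2-t_1t_2^{c}$ with $c\geq 2$ survives the pigeonhole and is excluded only because it is contained in $(t_1-t_2)$ and hence has height $1$. And the tools you propose for ``confirming consistency,'' Lemma~\ref{no-quartics} and Remark~\ref{not-parameterized-by-clutter}, concern sets parameterized by a clutter, i.e.\ exponent vectors that are $0$--$1$ characteristic vectors of edges; they do not apply to a general clutter-type monomial parameterization, which is the setting of the theorem, so they cannot carry the weight you assign them in closing the $s=4$ case.
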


\begin{proof} $\Rightarrow$): Assume that $I(\mathbb{X})$ is a
complete intersection. By Lemma~\ref{nov10-14}(c) one has $s\leq 4$.

Case (i): Assume that $s=4$. Setting $I=I(\mathbb{X})$, by hypothesis $I$ is
generated by $3$ binomials $g_1,g_2,g_3$. By Lemma~\ref{nov10-14}(b) for each pair
$1\leq i<j\leq 4$ there are positive integers $c_{ij}$ and $a_{ij}$
such that $t_i^{c_{ij}}t_j$ and $t_it_j^{a_{ij}}$ occur as terms in
$g_1,g_2,g_3$. Since there are at most $6$ monomials that occur in 
the $g_i$'s, we get that $c_{ij}=a_{ij}=1$ for $1\leq i<j\leq 4$. 
Thus, up to permutation of variables, there are $4$ subcases to
consider: 
\begin{eqnarray*}
(a):\ \  g_1=t_1(t_2-t_3),&g_2=t_1t_4-t_2t_3,&g_3=t_4(t_2-t_3).\\
(b):\ \  g_1=t_1(t_2-t_3),&g_2=t_4(t_1-t_3),&g_3=t_2(t_3-t_4).\\ 
(c):\ \  g_1=t_1t_2-t_3t_4,&g_2=t_1t_3-t_2t_4,&g_3=t_2t_3-t_1t_4.\\
(d):\ \  g_1=t_3(t_1-t_2),&g_2=t_1(t_3-t_4),&g_3=t_2(t_1-t_4).
\end{eqnarray*}
Subcase (a): This case cannot occur because the ideal $(g_1,g_2,g_3)$ has height
$2$.  

Subcase (b): The reduced Gr\"obner basis of $I=(g_1,g_2,g_3)$ with
respect to the GRevLex order $\prec$ is given by 
\begin{eqnarray*}
g_1=t_1t_2-t_1t_3,&g_2=t_1t_4-t_3t_4,&g_3=t_2t_3-t_2t_4,\\
g_4=t_3^2t_4-t_2t_4^2,&
g_5=t_1t_3^2-t_2t_4^2,& 
g_6=t_2^2t_4^2-t_2t_4^3.
\end{eqnarray*}
Hence the binomial $h=t_2t_4-t_3t_4\notin I$ because $t_2t_4$ does not
belong to ${\rm in}_\prec(I)$, the initial ideal of $I$. Since
$h^2=-2t_4^2g_3+t_4g_4+g_6$, we get that $h\in{\rm rad}(I)$. Thus $I$
is not a radical ideal which is impossible because $I=I(\mathbb{X})$
is a vanishing ideal. Therefore this case cannot occur.

Subcase (c): In this case one has
$I=(t_1t_2-t_3t_4,t_1t_3-t_2t_4,t_2t_3-t_1t_4)$, as required. From
Lemma~\ref{nov12-14}, we obtain that $q$ is odd. 

Subcase (d): The reduced Gr\"obner basis of $I=(g_1,g_2,g_3)$ with
respect to the GRevLex order $\prec$ is given by 
\begin{eqnarray*}
h_1=t_2t_3-t_1t_4,&g_2=t_1t_3-t_1t_4,&g_3=t_1t_2-t_2t_4,\\
g_4=t_1t_4^2-t_2t_4^2,&
g_5=t_1^2t_4-t_2t_4^2,&
g_6=t_2^2t_4^2-t_2t_4^3.
\end{eqnarray*}
Setting $h=t_1t_4-t_2t_4$, as in Subcase (b), one can readily verify
that $h\notin I$ and $h^2\in I$. Hence $I$ is not a radical ideal.
Therefore this case cannot occur.

Case (ii): Assume that $s=3$. By hypothesis $I=I(\mathbb{X})$ is
generated by $2$ binomials $g_1,g_2$. By Lemma~\ref{nov10-14}(b) for each pair
$1\leq i<j\leq 3$ there are positive integers $c_{ij}$ and $a_{ij}$
such that $t_i^{c_{ij}}t_j$ and $t_it_j^{a_{ij}}$ occur as terms in
$g_1,g_2$. Since there are at most $4$ monomials that occur in 
the $g_i$'s it is seen that, up to permutation of variables, there
are $2$ subcases to consider: 
\begin{eqnarray*}
(a):\ & &g_1=t_1t_3-t_2t_3,\ g_2=t_1^{c_{12}}t_2-t_1t_2^{a_{12}}
\mbox{ with }c_{12}=a_{12}\geq 2.\\
(b):\  & & g_1=t_1t_2-t_2t_3,\ g_2=t_1t_3-t_2t_3.
\end{eqnarray*}
Subcase (a) cannot occur because the ideal
$I=(g_1,g_2)$, being contained in $(t_1-t_2)$, has height $1$. Thus we
are left with subcase (b), that is, $I=(t_1t_2-t_2t_3,t_1t_3-t_2t_3)$,
as required. 

Case (iii): If $s=2$, then $\mathbb{X}$ is parameterized by $y^{v_1}$,
$y^{v_2}$. Pick $y_k\in{\rm supp}(y^{v_1})\setminus{\rm
supp}(y^{v_2})$. Making $y_k=0$ and $y_\ell=1$ for $\ell\neq k$, we
get that $[e_2]\in\mathbb{X}$, and by a similar
argument $[e_1]\in\mathbb{X}$. As $\mathbb{X}\cup\{[0]\}$ is a monoid
under componentwise multiplication, by Lemma~\ref{nov14-14-1}, 
$I(\mathbb{X})$ has the required form.  

Case (iv): If $s=1$, this case is clear. 

$\Leftarrow$) The converse is clear because the vanishing ideal 
$I(\mathbb{X})$ has height $s-1$.
\end{proof}

\begin{proposition}\label{azucena-vila-ci-prop} 
If $I$ is an ideal of $S$ of one of the following forms:
\begin{itemize}
\item[(i)] $s=4$, $q$ is odd and 
$I=(t_1t_2-t_3t_4,t_1t_3-t_2t_4,t_2t_3-t_1t_4)$,
\item[(ii)] $s=3$ and $I=(t_1t_2-t_2t_3,t_1t_3-t_2t_3)$,
\item[(iii)] $s=2$ and $I=(t_1^{r+1}t_2-t_1t_2^{r+1})$, where
$0\neq r\in\mathbb{N}$ and $r$ divides $q-1$, 
\end{itemize}
then there is a set $\mathbb{X}$ in $\mathbb{P}^{s-1}$ of clutter type
 such that $I$ is the vanishing ideal $I(\mathbb{X})$. 
\end{proposition}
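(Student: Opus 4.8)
The plan is to realize each of the three ideals as a vanishing ideal by producing an explicit monomial parameterization---which in every case has already been written down in an earlier remark or lemma---and then to check that the parameterizing monomials are of clutter type. Since the three cases are independent, I would dispose of them one at a time, and in each case the computation of the vanishing ideal is quoted rather than redone.

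For case (i) I would take the eight-point set $\mathbb{X}\subset\mathbb{P}^3$ of Remark~\ref{az12-macaulay2}, parameterized by the monomials $y^{v_1},\ldots,y^{v_4}$ displayed there with $r=(q-1)/2$; that remark already records $I(\mathbb{X})=(t_1t_2-t_3t_4,\,t_1t_3-t_2t_4,\,t_2t_3-t_1t_4)$, so nothing remains except to verify the clutter-type condition. Reading off supports, each ${\rm supp}(y^{v_i})$ consists of seven of the eight variables $y_1,\ldots,y_8$, the four supports omitting the four distinct variables $y_8,y_7,y_6,y_5$ respectively; hence for $i\neq j$ the support of $y^{v_i}$ contains the variable that $y^{v_j}$ omits, and no containment can hold. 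Here the hypothesis that $q$ is odd forces $r\geq 1$, so none of the listed exponents vanishes and the supports are exactly as written.

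For case (ii) I would use the four-point set $\mathbb{X}\subset\mathbb{P}^2$ of Remark~\ref{az13-mac2}, parameterized by $y_1^{q-1}y_2^{q-1}$, $y_2^{q-1}y_3^{q-1}$, $y_1^{q-1}y_3^{q-1}$, for which that remark gives $I(\mathbb{X})=(t_1t_2-t_2t_3,\,t_1t_3-t_2t_3)$. The three supports are the three two-element subsets of $\{y_1,y_2,y_3\}$, and no two-element set contains another, so $\mathbb{X}$ is of clutter type. For case (iii) I would invoke Lemma~\ref{nov14-14} directly: as $r$ divides $q-1$ and $\beta$ generates the cyclic group $\mathbb{F}_q^*$ of order $q-1$, the element $\beta^{k}$ with $k=(q-1)/r$ has order exactly $r$, so the hypotheses of that lemma hold with this $k$, giving a set $\mathbb{X}\subset\mathbb{P}^1$ parameterized by $y_1^{q-1}$ and $y_2^{q-1}y_3^{k}$ with $I(\mathbb{X})=(t_1^{r+1}t_2-t_1t_2^{r+1})$; the lemma already asserts $\mathbb{X}$ is of clutter type, which is transparent since the supports $\{y_1\}$ and $\{y_2,y_3\}$ are disjoint.

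The work is therefore almost entirely bookkeeping, because the genuinely substantive step---the identification of each vanishing ideal---was carried out in the cited results. The only new thing to be careful about is the clutter-type (no-containment) condition, and the sole nontrivial instance is case (i), where the monomials are elaborate; I expect that to be the main, if modest, obstacle, and as indicated it reduces to the observation that the four seven-element supports omit four distinct variables. The existence of an element of prescribed order in case (iii) is the other point needing a word, and it is immediate from the cyclicity of $\mathbb{F}_q^*$.
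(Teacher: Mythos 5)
Your proposal is correct and follows essentially the same route as the paper, which simply cites Remark~\ref{az12-macaulay2}, Remark~\ref{az13-mac2}, and Lemma~\ref{nov14-14} for the three cases. The only difference is that you explicitly verify the clutter-type condition on the supports and the existence of $\beta^k$ of order $r$, details the paper leaves to the reader; both checks are carried out correctly.
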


\begin{proof} The result follows from 
Lemma~\ref{nov12-14} and Remark~\ref{az12-macaulay2}, 
Lemma~\ref{nov12-14-1} and Remark~\ref{az13-mac2}, and 
Lemma~\ref{nov14-14}, respectively
\end{proof}

\begin{problem} Let $\mathbb{X}$ be a set of clutter type such that
$I(\mathbb{X})$ is a complete intersection. Using the techniques
of \cite{duursma-renteria-tapia,cartesian-codes,algcodes,ci-codes} and
Theorem~\ref{azucena-vila-ci} find formulas for 
the {\it basic parameters\/} of the Reed-Muller-type codes 
associated to $\mathbb{X}$.

\end{problem}

\bibliographystyle{plain}

\end{document}